\documentclass[ECP,preprint]{ejpecp}
\usepackage{xcolor}

\newcommand{\R}{\mathbb{R}}
\newcommand{\N}{\mathbb{N}}

\newcommand{\pP}{\mathbb{P}}
\newcommand{\bP}[1]{\pP\bb{#1}}
\newcommand{\E}{\mathbb{E}}
\newcommand{\CE}[2]{\E\Cb{#1\,\middle|\,#2}}
\newcommand{\CP}[2]{\pP\bb{#1\,\middle|\,#2}}

\newcommand{\ind}{\mathbf{1}}

\newcommand{\bb}[1]{\left(#1\right)}
\newcommand{\Cb}[1]{\left[#1\right]}
\newcommand{\cb}[1]{\left\{#1\right\}}
\newcommand{\norm}[1]{\left\Vert#1\right\Vert}
\newcommand{\abs}[1]{\left|#1\right|}

\newcommand{\Lcal}{\mathcal{L}}
\newcommand{\Dcal}{\mathcal{D}}

\newcommand{\defeq}{\mathrel{\mathop:}=}

\SHORTTITLE{Branching Brownian motion in a self-generated resource environment}
\TITLE{A sharp survival threshold for branching Brownian motion in a self-generated resource environment}
\AUTHORS{
  Maciej~G{\l}uchowski\footnote{University of California, Los Angeles, USA. \EMAIL{mgluchowski@math.ucla.edu}}\orcid{0009-0009-4239-9674}%
  \and
  Georg~Menz\footnote{University of California, Los Angeles, USA. Corresponding author. \EMAIL{gmenz@math.ucla.edu}}\orcid{0000-0001-9295-5063}}

\KEYWORDS{branching Brownian motion; interacting particle systems; survival and extinction; local regulation; self-generated environment}
\AMSSUBJ{60K35}
\AMSSUBJSECONDARY{60J80}

\hypersetup{
  pdftitle={A sharp survival threshold for branching Brownian motion in a self-generated resource environment},
  pdfauthor={Maciej G{\l}uchowski and Georg Menz}
}

\ABSTRACT{
We study branching Brownian motion in a resource environment created by the particles themselves. Particles reproduce at rate~$\beta$ times the resource nearby, resource is consumed at rate~$c$ times the number of particles nearby, and each death returns resource mass~$m$. Let~$\theta=\beta m/c$, which represents the number of births supported by resource from one corpse, assuming perfect efficiency. We prove that the population dies out almost surely if~$\theta\leq1$, even with perfect recycling ($\theta=1$). If~$\theta>1$ and the interaction kernel has finite range and is bounded below at short distances, the population survives with positive probability from every initial state with at least two particles, or with one particle and some resource. The threshold does not depend on the diffusivity, the death rate or the interaction kernel.
}
\begin{document}

\section{Introduction}

Branching Brownian motion is a basic model of a population that moves, reproduces and dies: particles move as independent Brownian motions and give birth and die at constant rates. Starting from finitely many particles, the population survives with positive probability exactly when the birth rate exceeds the death rate. For branching random walks in a random environment, survival is decided by the environment, with different criteria for local and global survival~\cite{GantertMullerPopovVachkovskaia2010}. The environment can also be consumed by the particles, as in cookie branching random walks, where particles branch and move differently at sites carrying a cookie and a cookie is removed once its site is visited~\cite{BartschEtAl2013}. In this paper the environment is not given in advance. It is a resource that the particles consume, and that they themselves create when they die.

More precisely, we study Brownian particles with diffusivity~$\sigma^2$, with positions represented by a counting measure~$\eta_t$, coupled to a resource measure~$\mu_t$. A particle gives birth at a rate proportional to the resource around it, and resource is consumed at a rate proportional to the number of particles around it. Particles die at a constant rate~$\delta$ and then return resource mass~$m>0$ at the death location. There is no external resource supply. We ask when this process survives with positive probability. The question is motivated by microbial populations without external food supply: experiments and models of bacterial starvation show that recycling material from dead cells can help sustain a population~\cite{TakanoEtAl2017,SchinkEtAl2019}. In spatial population models, resource limitation is usually modeled through crowding, which increases death rates~\cite{MaillardPenington2024} or reduces birth rates~\cite{BirknerDepperschmidt2007}; survival and extinction of such locally regulated populations have been studied extensively~\cite{Etheridge2004,BlathEtheridgeMeredith2007,BirknerDepperschmidt2007,BirknerEtAl2024,MaillardPenington2024}; see~\cite{FournierMeleard2004} for individual-based models. Another approach includes resources explicitly, as in stochastic chemostat models~\cite{ColletEtAl2013} and spatial simulations with externally supplied resources~\cite{WelkerStark2024,Kochocki2026}. To our knowledge, a spatial model in which resource is created only by recycling has not been analyzed rigorously before.

Let~$\beta>0$ and~$c>0$ be the birth and consumption coefficients, and set~$\theta\defeq\frac{\beta m}{c}$. This parameter has a direct interpretation: one corpse, if completely consumed, supports~$\theta$ births on average. If~$\theta<1$, resource is lost in every life cycle,~$\theta=1$ corresponds to perfect recycling, and~$\theta>1$ to a net gain. In a well-mixed, mean-field model the survival threshold is therefore~$\theta=1$. In our spatial model one would expect a larger threshold: a corpse is fully consumed only if particles stay near it, but particles move away and die, and resource left behind is wasted. The threshold might moreover depend on the diffusivity, the death rate and the interaction kernel, just as spatial correlations push the critical infection rate of the contact process above its mean-field value~\cite[Chapter~VI]{Liggett1985}.

Our main result, Theorem~\ref{thm:main}, shows that this does not happen: the threshold is exactly~$\theta=1$. If~$\theta\leq1$, the process dies out almost surely from every finite initial state. If~$\theta>1$ and particles interact with resource only within a fixed distance, with interaction strength bounded below at short distances, then survival has positive probability exactly when the initial state contains at least two particles, or one particle and some resource. The threshold does not depend on the dimension, the diffusivity, the death rate, the speed of consumption or the shape of the interaction kernel. This is in contrast to branching systems in a given environment~\cite{GantertMullerPopovVachkovskaia2010} and to locally regulated populations, where survival depends on dispersal relative to the interaction range~\cite{Etheridge2004}.

For~$\theta\leq1$, the total mass~$\mu_t\bb{\R^d}+\frac{c}{\beta}\eta_t\bb{\R^d}$ is a nonnegative supermartingale and therefore converges. Each birth increases it by~$\frac{c}{\beta}$, so only finitely many births occur and the population dies out. This direction is elementary. The case~$\theta>1$ is harder because the threshold is sharp: for~$\theta$ close to~$1$, at most a fraction~$1-1/\theta$ of each corpse may be wasted, so an argument that loses a fixed fraction of the resource proves survival only for large~$\theta$. Moreover, unlike in classical branching Brownian motion, particles do not branch independently: each birth rate depends on the resource that particles share. We show that dense colonies consume their corpses with little waste and that this consumption necessarily produces new births; the latter is Lemma~\ref{lem:short-time-growth-new}, the main technical part of the proof. Since~$\theta>1$, these births more than make up for deaths, which allows us to carry out a block argument for colony growth.

Like every mathematical model, ours is an idealization. It retains individual life spans, local consumption and reproduction, motion of the particles, and the return of resource at death. It neglects, among other effects, motion and decay of the resource, delays between death and the release of resource, crowding, saturation of consumption, and dependence of the death rate on starvation. The death rate~$\delta$ models background mortality, for instance from radiation, freezing, desiccation or predation. The model thus describes the population over a time scale that is long compared with a single division but short compared with the time scale on which the neglected effects matter. If the resource is conserved, producing a particle requires at least the resource it returns at death, which forces~$\theta\leq1$. The regime~$\theta>1$ therefore describes a population in which particles have access to ambient building blocks or energy, like minerals or light, but lack organic material which they must create and recycle. An example is the colonization of barren mineral substrates such as fresh volcanic deposits or glacier forefields, where organic carbon is scarce and pioneer microbes live in part on atmospheric trace gases or ancient carbon~\cite{BardgettEtAl2007,KingWeber2008}. Pioneer communities also include photosynthetic and nitrogen-fixing bacteria that add organic carbon and nitrogen~\cite{SchmidtEtAl2008}, so a corpse can return more organic material than its production consumed. A more speculative example is a habitable but uninhabited environment such as Mars~\cite{CockellEtAl2012}. In the early stages of colonization crowding is negligible. We have kept the model minimal, so that the threshold is exact and its proof is short.
Some of the neglected effects should not move the threshold: decay or diffusion of the
resource act on a corpse only until it is consumed, and a dense colony consumes all nearby resource almost immediately. Other effects change the mechanism. A death rate that
increases under starvation couples mortality to the resource, and crowding introduces
the local regulation studied in~\cite{BirknerDepperschmidt2007,Etheridge2004,BirknerEtAl2024};
whether a sharp threshold persists in these settings is open. Quantitative aspects of survival are not addressed here.

\section{The model}\label{sec:model}

We define the process through four mechanisms: diffusion, birth, resource consumption, and death with return of resource. For~$d\geq1$, the state space consists of pairs~$\bb{\eta,\mu}$, where~$\eta$ is a finite counting measure on~$\R^d$ and~$\mu$ is a finite nonnegative Borel measure on~$\R^d$. For a finite measure~$\nu$ and bounded measurable~$g$, write~$\langle\nu,g\rangle\defeq\int_{\R^d}g\,d\nu$. Let~$K\colon\R^d\to[0,\infty)$ be bounded, symmetric, Borel, and not identically zero, and for every finite nonnegative measure~$\lambda$ set the convolution~$\bb{K*\lambda}\bb{x}
=
\int_{\R^d}K\bb{x-y}\,\lambda(dy)$.

Fix~$\beta,\delta,c,m,\sigma>0$. A particle gives birth with coefficient~$\beta$, dies at rate~$\delta$, and returns resource mass~$m$ at death. Resource is consumed with coefficient~$c$, and particles move as independent Brownian motions with diffusivity~$\sigma^2$. For cylinder functions
\begin{equation*}
F\bb{\eta,\mu}
=
f\bb{
\langle\eta,\varphi_1\rangle,\dots,\langle\eta,\varphi_n\rangle,
\langle\mu,\psi_1\rangle,\dots,\langle\mu,\psi_k\rangle
},
\qquad
f\in C_b^2\bb{\R^{n+k}},
\end{equation*}
with~$\varphi_i\in C_b^2\bb{\R^d}$ and~$\psi_j$ bounded measurable, the generator decomposes as
\begin{equation*}
\Lcal
=
\Lcal_{\mathrm{diff}}
+
\Lcal_{\mathrm{birth}}
+
\Lcal_{\mathrm{con}}
+
\Lcal_{\mathrm{death}}.
\end{equation*}
All derivatives of~$f$ below are evaluated at~$\bb{
\langle\eta,\varphi_1\rangle,\dots,\langle\eta,\varphi_n\rangle,
\langle\mu,\psi_1\rangle,\dots,\langle\mu,\psi_k\rangle
}$.
\paragraph{Diffusion.}
Particles move as independent Brownian motions with generator~$\frac{\sigma^2}{2}\Delta$. The resource does not diffuse. Writing~$\eta=\sum_{l=1}^N\delta_{x_l}$ we have
\begin{equation*}
\Lcal_{\mathrm{diff}}F\bb{\eta,\mu}
=
\frac{\sigma^2}{2}\sum_{l=1}^N\Delta_{x_l}F\bb{\eta,\mu}.
\end{equation*}

\paragraph{Births.}
Each particle at~$x$ gives birth at its current location at rate~$\beta\bb{K*\mu}\bb{x}$:
\begin{equation*}
\Lcal_{\mathrm{birth}}F\bb{\eta,\mu}
=
\beta
\int_{\R^d}
\bb{K*\mu}\bb{x}
\Cb{
F\bb{\eta+\delta_x,\mu}-F\bb{\eta,\mu}
}
\,\eta(dx).
\end{equation*}

\paragraph{Resource consumption.}
Resource at~$y$ is consumed at rate~$c\bb{K*\eta}\bb{y}$:
\begin{equation*}
\Lcal_{\mathrm{con}}F\bb{\eta,\mu}
=
-c\sum_{j=1}^k\partial_{n+j}f
\int_{\R^d}
\psi_j\bb{y}\bb{K*\eta}\bb{y}\,\mu(dy).
\end{equation*}
Equivalently, if no deaths occur on~$(s,t]$, then the resource evolves deterministically:
\begin{equation}\label{eq:resource-between-deaths}
\mu_t(dy)
=
\exp\cb{
-c\int_s^t\bb{K*\eta_u}\bb{y}\,du
}
\mu_s(dy).
\end{equation}
\paragraph{Deaths.}
Each particle dies at rate~$\delta$. A death at~$x$ returns resource mass~$m$ at~$x$:
\begin{equation*}
\Lcal_{\mathrm{death}}F\bb{\eta,\mu}
=
\delta
\int_{\R^d}
\Cb{
F\bb{\eta-\delta_x,\mu+m\delta_x}-F\bb{\eta,\mu}
}
\,\eta(dx).
\end{equation*}
Deaths occur only at positions of particles, so~$\eta-\delta_x$ above is a nonnegative discrete measure for~$\eta$-a.e.~$x$. The total death rate is~$\delta\eta\bb{\R^d}$, which vanishes when~$\eta=0$.

The dynamics above define a c\`adl\`ag strong Markov process. We write~$\pP$ and~$\E$ for the process distribution and expectation, and~$(\mathcal F_t)_{t\geq0}$ for its natural filtration. A standard recursive construction is obtained from the birth-and-death construction of~\cite[Sections~2.2--2.3 and Theorem~3.1]{FournierMeleard2004} by adding independent Brownian motions between events. Conditional on the particle trajectories, the resource measure evolves deterministically according to~\eqref{eq:resource-between-deaths}, with mass~$m$ added at each death.

There is a natural nondimensionalization of the model. We measure time in units of the mean particle lifetime~$1/\delta$, length in units of the typical distance a particle moves during its lifetime~$\sigma/\sqrt\delta$, and resource in units of corpse mass~$m$:
\begin{equation}\label{eq:nondimensionalization}
t\mapsto\delta t,
\qquad
x\mapsto\frac{\sqrt{\delta}}{\sigma}x,
\qquad
\eta_t(A)\mapsto\eta_{\frac{t}{\delta}}\bb{\frac{\sigma}{\sqrt{\delta}}A},
\qquad
\mu_t(A)\mapsto\frac1m\mu_{\frac{t}{\delta}}\bb{\frac{\sigma}{\sqrt{\delta}}A}.
\end{equation}
Using the same notation for the nondimensionalized process and kernel, the parameters become
\begin{equation*}
\bb{\beta,\delta,m,c,\sigma,K}
\mapsto
\bb{
\theta,
1,1,1,1,\,
x\mapsto\frac{c}{\delta}K\bb{\frac{\sigma}{\sqrt{\delta}}x}
}, \qquad \theta\defeq\frac{\beta m}{c}.
\end{equation*}
Thus, without loss of generality, particles are standard Brownian motions, have mean lifetime one, and each death returns one unit of resource. From now on the model is determined by~$\theta$ and~$K$.

\section{Main result}\label{sec:main-result}
Let~$\tau_0\defeq\inf\cb{t\geq0: \eta_t\bb{\R^d}=0}$.
We say that the process becomes extinct if~$\tau_0<\infty$ and survives if~$\tau_0=\infty$. Since births require a living particle, extinction is absorbing.

In a mean-field model  every~$m$ units of consumed resource are replenished by one death, so the population grows when the total birth rate exceeds the total consumption rate measured in units of~$m$. By symmetry of~$K$,
\begin{equation*}
\beta\int_{\R^d}\bb{K*\mu}\bb{x}\,\eta(dx)
=
\theta\,\frac{c}{m}
\int_{\R^d}\bb{K*\eta}\bb{y}\,\mu(dy).
\end{equation*}
Thus the total birth rate is~$\theta$ times the total consumption rate measured in units of~$m$, which suggests a naive threshold~$\theta=1$. Our main result shows that this threshold remains sharp in the spatial stochastic model.

\begin{theorem}\label{thm:main}
For every deterministic finite initial state~$\bb{\eta_0,\mu_0}$:
\begin{enumerate}
\item If~$\theta\leq1$, then~$\bP{\tau_0<\infty}=1$.
\item Suppose~$\theta>1$ and, for some~$0<r\leq R$ and~$k_0>0$,
\begin{equation*}
 K\bb{x}=0\quad\text{for }\abs{x}>R,
 \qquad K\bb{x}\geq k_0\quad\text{for }\abs{x}\leq r.
\end{equation*}
Then~$\;\bP{\tau_0=\infty}>0
 \quad\Longleftrightarrow\quad
 \eta_0\bb{\R^d}\geq2\ \text{ or }\ \eta_0\bb{\R^d}=1,\ \mu_0\bb{\R^d}>0$.
\end{enumerate}
\end{theorem}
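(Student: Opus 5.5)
We work in the nondimensionalized model, with~$\delta=m=c=\sigma=1$ and birth coefficient~$\theta$. For part~1 we use the Lyapunov functional~$M_t\defeq\mu_t\bb{\R^d}+\theta^{-1}\eta_t\bb{\R^d}$. Applying the generator to the total masses, deaths contribute~$(1-\theta^{-1})\eta_t\bb{\R^d}\leq0$ when~$\theta\leq1$. Consumption contributes~$-\int(K*\eta)\,d\mu$, and births contribute~$\theta\cdot\theta^{-1}\int(K*\mu)\,d\eta$; these two cancel by symmetry of~$K$. Hence~$M$ is a nonnegative supermartingale. A localization at the jump times justifies this, since between jumps the rates are bounded by~$\norm{K}_\infty M_t^2$-type quantities. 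Therefore~$M_t$ converges a.s. to a finite limit. Every birth raises~$M$ by~$\theta^{-1}$, and no other jump raises it: a death changes~$M$ by~$1-\theta^{-1}\leq0$. The compensator of the birth count is dominated by the compensator of the decrease of~$M$ plus the drift. Concretely,~$M_t+\theta^{-1}(1-\theta)\cdot\#\{\text{deaths}\}$ is a martingale when~$\theta<1$, and at~$\theta=1$ the quantity~$M$ is a martingale. So we show instead that on survival infinitely many births must occur. With finitely many births, the population is eventually a pure-death system and dies. Births occur while~$M$ has a nonvanishing jump of fixed size~$\theta^{-1}$ upward, and a convergent càdlàg process has only finitely many jumps of size~$\geq\theta^{-1}$ unless compensated by equally large downward jumps. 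Downward moves in~$M$ come only from continuous consumption (cancelled exactly) and deaths of size~$1-\theta^{-1}$. For~$\theta=1$ this forces finitely many births directly. For~$\theta<1$ we use that~$M_t$ converges and that the expected total number of deaths is bounded by~$\E M_0/(\theta^{-1}-1)$, hence finite. Finitely many deaths with finitely many particles alive means extinction, because each particle dies at rate~$1$.

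For the necessity direction in part~2, consider one particle and no resource. Then~$K*\mu\equiv0$ until the first death, which kills the population. Likewise~$\eta_0=0$ is trivially extinct. Thus these initial states give~$\bP{\tau_0=\infty}=0$.

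For sufficiency we reduce every admissible initial state to a ``dense colony''. Call a state a good~$(n,\rho)$-seed if it has at least~$n$ particles in a ball of radius~$\rho\ll r$. From two particles, with positive probability they meet within distance~$r/2$, one dies, and the other stays near the corpse and gives birth. From one particle with resource, it wanders into the support of~$K*\mu$ and gives birth. Iterating with positive probability yields an~$n$-seed for any fixed~$n$. This step uses only the lower bound~$K\geq k_0$ on~$\abs{x}\leq r$ and the continuity of Brownian paths. The key input is then Lemma~\ref{lem:short-time-growth-new}, in the form: for~$\theta>1$ there are~$n_0$,~$T$, and~$\varepsilon>0$ such that an~$n$-seed with~$n\geq n_0$ produces, by time~$T$, with probability~$\geq1-e^{-\varepsilon n}$, at least two disjoint~$\lceil(1+\varepsilon)n\rceil$-seeds. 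These seeds are placed at distance~$>2R+$ (margin) from each other and from leftover resource. The finite range~$R$ makes the evolutions in the separated regions conditionally independent until their particles come within~$2R$.

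We then run a block argument, comparing the seed counts with a supercritical Galton--Watson process whose offspring number is~$\geq2$ with probability~$1-e^{-\varepsilon n}$ at generation size~$n$. The summable failure probabilities~$\sum_k e^{-\varepsilon(1+\varepsilon)^k n_0}$ give survival with positive probability once~$n_0$ is large. Interactions between colonies can only help, or can be avoided by spatial separation with high probability, by a Brownian displacement bound over time~$T$. The main obstacle is the growth lemma: showing that a dense colony consumes its corpses with waste fraction below~$1-1/\theta$. One must also show that this consumption is converted into births, which requires a martingale concentration bound comparing births to consumption via the identity above.
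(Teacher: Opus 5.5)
Part~(i) is correct and is the paper's supermartingale, since $V=\theta M$. Your caveat about ``compensating downward jumps'' is unnecessary. A c\`adl\`ag path that converges as $t\to\infty$ has only finitely many jumps of size at least $\theta^{-1}$, so finitely many births follows at once for every $\theta\leq1$. Your separate death-count bound for $\theta<1$ is nonetheless valid. Necessity in part~(ii) and your entrance step are also fine.

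The gap is sufficiency for $\theta>1$. You take as input an unconditional growth statement: an $n$-seed yields, by time $T$ and with probability at least $1-e^{-\varepsilon n}$, two separated $\lceil(1+\varepsilon)n\rceil$-seeds. You then call its proof ``the main obstacle'' without giving it, and this is essentially all of part~(ii). It is also not what Lemma~\ref{lem:short-time-growth-new} says. That lemma gives growth only on the event that every corpse in $\mathsf C$ is $\alpha$-consumed, and nothing in your proposal makes that event likely. Particles diffuse away and die, so a colony dense at time $0$ is not automatically dense near its later corpses. The margin is also thin: over a window of length $h$ the growth factor is only $1+((1-\kappa)\alpha\theta-1)h+o(h)$.

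The missing ingredients are:
(a) short windows in which only corpses in $\mathsf C$ (dying before the last $\kappa h$ and having moved at most $r/4$) must be consumed, with $\abs{\mathsf S}\approx s_hM$ and $\abs{\mathsf C}\approx c_hM$ by Chernoff bounds;
(b) the pathwise bound $\Lambda_{\tau+h}\geq\theta\sum_{\mathsf q\in\mathsf C}(1-m_{\mathsf q}(\tau+h))$ for the compensator of births by \emph{any} particle in the enlarged ball, turned into a lower bound on births via the exponential supermartingale $Z$ (your ``martingale concentration bound'' points this way, but the global rate identity must be localized to specific corpses and a specific ball);
(c) a reserve: a fixed fraction of the initial $H$ particles, the sets $\mathsf D_m$, parked in each of the $O((1+k)^d)$ covering balls and alive throughout $[S+h,S+T]$, so every such corpse has $n$ neighbours within distance $r$ and $m_{\mathsf q}\leq e^{-k_0n\kappa h}\leq1-\alpha$;
(d) re-concentration: after $k$ steps the colony is spread over a ball of radius linear in $k$, and a single dense ball is recovered only because $\lambda^k$ beats the covering number $C(1+k)^d$.
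Your output seeds, of radius $\rho\ll r$ with more than $n$ particles, require exactly (c) and (d).

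The two-seed Galton--Watson layer is unnecessary and brings in unjustified claims. The model has no monotone coupling, so ``interactions between colonies can only help'' is not available. Seeds also cannot be kept away from leftover resource, since corpses remain wherever the colony lived. Your own final estimate $\sum_k e^{-\varepsilon(1+\varepsilon)^kn_0}<1$ needs only one seed whose size grows geometrically, and that is how the paper concludes. Proposition~\ref{prop:dense-doubling-new} takes $H$ particles in an $r$-ball to $2H$ particles in an $\mathcal F_{S+T}$-measurable $r$-ball with failure probability $e^{-\gamma H}$, and a union bound over successive doublings finishes. Other particles cause no harm there: all consumption of corpses in $\mathsf C$ is done by particles in the enlarged ball, and their births are counted in $\mathsf N$.
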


The condition on the initial state in part~(ii) is necessary: an empty population cannot give birth, and a single particle with no resource has zero birth rate so it dies with no offspring. The nondimensionalization~\eqref{eq:nondimensionalization} leaves~$\theta = \frac{\beta m}{c}$ unchanged and preserves the assumptions on~$K$. Consequently, within the class covered by part~(ii), the extinction--survival characterization depends on the model parameters only through~$\theta$, with sharp threshold~$\theta=1$.

\section{Extinction for \texorpdfstring{$\theta\leq1$}{theta <= 1}}\label{sec:extinction}

The idea of the proof is simple: if~$\theta\leq 1$, then the total mass in the system is a nonnegative supermartingale, and thus converges almost surely. A possible limit cannot have any particles, so the system dies.

\begin{proof}[Proof of Theorem~\ref{thm:main}(i)]
Let~$V_t
\defeq
\eta_t\bb{\R^d}+\theta\mu_t\bb{\R^d}$,
and let~$N_t^b$ and~$N_t^d$ denote the numbers of births and deaths by time~$t$. Let~$A_t$ be the total consumption rate at time~$t$, given by~$A_t
\defeq \int_{\R^d}\bb{K*\eta_t}\bb{y}\,\mu_t(dy)$.
 Between jumps, resource consumption decreases~$V_t$ at rate~$\theta A_t$. A birth increases~$V_t$ by~$1$, while a death removes one particle and adds one unit of resource, so it changes~$V_t$ by~$\theta-1$. Hence,
\begin{equation}\label{eq:V-decomposition}
V_t
=
V_0+N_t^b+(\theta-1)N_t^d
-\theta\int_0^t A_s\,ds.
\end{equation}
By Fubini's theorem and the symmetry of~$K$ we have~$A_t = \int_{\R^d}\bb{K*\mu_t}\bb{x}\,\eta_t(dx)$, so the process~$\theta A_t$ is also a compensator for~$N_t^b$. Thus
\begin{equation*}
M_t^b
\defeq
N_t^b-\theta\int_0^t A_s\,ds,
\qquad
M_t^d
\defeq
N_t^d-\int_0^t\eta_s\bb{\R^d}\,ds
\end{equation*}
are local martingales. Substituting these in~\eqref{eq:V-decomposition} gives
\begin{align*}
V_t
&=
V_0+M_t^b+(\theta-1)M_t^d
+(\theta-1)\int_0^t\eta_s\bb{\R^d}\,ds,
\end{align*}
so if~$\theta\leq1$, then~$V$ is a nonnegative local supermartingale. Consequently, it converges.

Every birth produces a jump of size~$1$ in~$V$, so convergence implies that only finitely many births occur. Hence only finitely many particles ever exist, and since all eventually die, the population becomes extinct in finite time almost surely.
\end{proof}

\section{Survival for \texorpdfstring{$\theta>1$}{theta > 1}}\label{sec:survival}

The proof has three steps. First, we show that if a colony consumes its corpses without much waste, then over a short interval the colony grows by a fixed factor, except with probability exponentially small in the colony size. The point is to turn consumed corpse mass into a lower bound on the number of births. This is the main technical step of the proof. Second, we show that a sufficiently dense colony consumes its corpses efficiently, which allows us to iterate the short-time growth from step one. Over the iteration, the colony size grows exponentially, while the number of fixed-radius balls needed to cover the colony grows only polynomially. Thus, after finitely many steps, some such ball contains twice as many particles as the initial one. Finally, we show that every admissible initial state can reach such a dense ball with positive probability. Restarting the doubling estimate then gives survival with positive probability.

Before we proceed with the proof, we introduce an auxiliary labeling of particles. Let~$\cb{\mathsf T_t}_{t\geq0}$ denote the genealogical forest of particles born by time~$t$. Each birth adjoins a new child to its parent, so the forest grows into~$\mathsf T_\infty$. Let~$\mathsf L_t\subseteq\mathsf T_t$ and~$\mathsf D_t = \mathsf T_t\setminus\mathsf L_t$ be, respectively, the living particles and corpses at time~$t$. For each particle~$\mathsf p\in\mathsf T_t$, let~$X_t\bb{\mathsf p}$ denote its position, fixed after death. Thus~$\eta_t=\sum_{\mathsf p\in\mathsf L_t}\delta_{X_t\bb{\mathsf p}}$. For each particle~$\mathsf p\in\mathsf T_\infty$, let~$b_{\mathsf p}\defeq\inf\cb{t\geq0:\mathsf p\in\mathsf T_t}$ and~$d_{\mathsf p}\defeq\inf\cb{t\geq0:\mathsf p\in\mathsf D_t}$ be its birth and death times.
For~$\mathsf p\in\mathsf D_t$, the mass remaining in its corpse at time~$t$ is
\begin{equation}\label{eq:corpse-mass-evolution}
m_{\mathsf{p}}(t)
\defeq
\exp\cb{
-\int_{d_{\mathsf{p}}}^t
\bb{K*\eta_s}\bb{X_{d_{\mathsf{p}}}\bb{\mathsf{p}}}\,ds
},
\qquad
t\geq d_{\mathsf{p}}.
\end{equation}
We can thus represent evolution of the resource by
\begin{equation}\label{eq:resource-evolution}
\mu_t(dy)
=
\exp\cb{
-\int_0^t\bb{K*\eta_s}\bb{y}\,ds
}\mu_0(dy)
+
\sum_{\mathsf{p}\in\mathsf D_t}
m_{\mathsf{p}}(t)\delta_{X_{d_{\mathsf{p}}}\bb{\mathsf{p}}}(dy).
\end{equation}

\subsection{Consumption implies growth}

The first step of the proof is the bridge between recycling of corpses and new births. We show that, over a short time interval, a large colony is very unlikely to consume most of its corpses without growing. Fix~$\alpha\in\bb{\frac{1}{\theta},1}$. We say that a corpse~$\mathsf{q}$ is \emph{$\alpha$-consumed by time~$t$} if~$m_{\mathsf{q}}(t)\leq1-\alpha$.
This means that at least an~$\alpha$-fraction of the corpse~$\mathsf q$ has been consumed by time~$t$. We choose~$\alpha\theta>1$ so that later we can argue that an~$\alpha$-consumed corpse facilitates, on average, more than one new birth. Choose~$\kappa\in(0,1)$ such that~$(1-\kappa)\alpha\theta>1$. A corpse that dies before the final fraction~$\kappa$ of a short time interval then has enough time left to be~$\alpha$-consumed.

\begin{lemma}\label{lem:short-time-growth-new}
There exist~$h>0$,~$\lambda>1$,~$\gamma>0$, and~$M_*\in\N$ such that the following holds for every~$L\geq 0$. Let~$\tau$ be a finite stopping time, let~$z$ be~$\mathcal F_\tau$-measurable, and let~$\mathsf P\subseteq\mathsf L_\tau$ be an~$\mathcal F_\tau$-measurable collection of~$M\geq M_*$ particles contained in a ball~$B_L\bb{z}$. Let
\begin{equation*}
\mathsf C
\defeq
\cb{
\mathsf{q}\in \mathsf P \cap \mathsf{D}_{\tau + \bb{1-\kappa}h}:
\sup_{\tau\leq s\leq d_{\mathsf{q}}}
\abs{X_s\bb{\mathsf{q}}-X_\tau\bb{\mathsf{q}}}
\leq\frac r4
}
\end{equation*}
be the particles which die sufficiently early and remain nearby. Then
\begin{equation*}
\CP{
\eta_{\tau+h}\bb{B_{L+R+r/2}\bb{z}}<\lambda M,\text{every }\mathsf{q}\in\mathsf C\text{ is $\alpha$-consumed by }\tau+h
}{
\mathcal F_\tau
}
\leq e^{-\gamma M}.
\end{equation*}
\end{lemma}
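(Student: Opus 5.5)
The plan is to combine three facts. After nondimensionalization, births are compensated by exactly $\theta$ times consumption. Corpses in $\mathsf C$ lie close to $z$, so only particles in a slightly larger ball can eat them. And the number of early deaths $|\mathsf C|$ is of order $hM$ with overwhelming probability. Throughout I work conditionally on $\mathcal F_\tau$ and use the strong Markov property. Set $B'\defeq B_{L+R+r/4}(z)$ and $B''\defeq B_{L+R+r/2}(z)$.

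\emph{Step 1: $\mathsf C$ is large.} The particles of $\mathsf P$ have independent $\mathrm{Exp}(1)$ lifetimes and independent Brownian paths. This holds because the death rate is constant and the motion does not depend on the environment. Hence the events $\{\mathsf q\in\mathsf C\}$ are i.i.d.\ given $\mathcal F_\tau$. Each has probability $p_h\geq(1-e^{-(1-\kappa)h})(1-\epsilon_h)$, where $\epsilon_h\to0$ is the probability that a Brownian motion leaves $B_{r/4}$ by time $h$. A Chernoff bound gives $|\mathsf C|\geq(1-\epsilon)(1-\kappa)hM$ except with probability $e^{-\gamma_1 M}$, with $\epsilon$ as small as we like once $h$ is small. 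In the same way, the number of particles of $\mathsf P$ that are still alive at $\tau+h$ and have stayed within $r/4$ of their start (hence lie in $B''$) is at least $(1-h-\epsilon h)M$, except with probability $e^{-\gamma_2M}$.

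\emph{Step 2: consumption of $\mathsf C$ forces births in $B'$.} Each $\mathsf q\in\mathsf C$ dies at a point within $L+r/4$ of $z$. Since $K$ vanishes outside $B_R$, only particles in $B'$ consume it. On the event that every $\mathsf q\in\mathsf C$ is $\alpha$-consumed by $\tau+h$, the total mass these particles remove from corpses in $\mathsf C$ during $[\tau,\tau+h]$ is at least $\alpha|\mathsf C|$. Let $N$ count births on $[\tau,\tau+h]$ whose parent lies in $B'$ at the time of birth. By the symmetry identity of Section~\ref{sec:main-result}, $N$ has compensator
\[
\Lambda=\theta\int_\tau^{\tau+h}\int_{B'}(K*\mu_s)(x)\,\eta_s(dx)\,ds .
\]
This is at least $\theta$ times the consumption of $\mathsf C$, hence at least $\theta\alpha|\mathsf C|$ on our event. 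To compare $N$ with $\Lambda$ I use the exponential supermartingale $\exp\{-sN_t+(1-e^{-s})\Lambda_t\}$. It gives $\mathbb P(N\leq a\Lambda,\ \Lambda\geq b)\leq e^{-c(a)b}$ for every $a<1$. Combining with Step 1, $N\geq a\theta\alpha(1-\epsilon)(1-\kappa)hM$ except with probability $e^{-\gamma_3M}$.

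\emph{Step 3: newborns stay, and conclusion.} A child is born in $B'$. It lies in $B''$ at time $\tau+h$ provided it survives and does not travel $r/4$ in time at most $h$. Given the birth times, these survival-and-confinement events are independent, each with probability at least $1-h-\epsilon_h$. The newborns may themselves reproduce, but counting only first-generation survivors among the $N$ suffices for a lower bound. I would handle the thinning with a second martingale or Chernoff argument. To avoid trouble with unbounded $N$, I stop at the first time $\lceil\lambda M\rceil$ qualifying newborns exist; on the event in question this never happens. The outcome is that
\[
\eta_{\tau+h}(B'')\geq M\Bigl[1-h+h\bigl(a\theta\alpha(1-\kappa)-O(\epsilon)\bigr)\Bigr]
\]
with exponentially small failure probability. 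Since $(1-\kappa)\alpha\theta>1$, I choose $a<1$ close to $1$ and then $h$ small, so that the bracket exceeds some $\lambda>1$. Finally I take $M_*$ large enough to absorb integer rounding and the constants. Each piece fails with probability $e^{-\gamma_iM}$, and summing them gives $e^{-\gamma M}$.

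I expect the main obstacle to be Step 3 combined with Step 2. The births are not independent: they depend on the shared resource, and their number is not a priori bounded. So the concentration must come from martingale inequalities for $N-\Lambda$ and for the thinning, not from independence. I would also need to check that the compensator really dominates $\theta$ times the consumption restricted to $\mathsf C$, which requires attributing consumption to consumers in $B'$ through Fubini and symmetry.
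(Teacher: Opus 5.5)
Your proposal is correct and follows the paper's proof essentially step for step: Chernoff bounds for the survivors and the early corpses $\mathsf C$, the bound $\Lambda\geq\theta\sum_{\mathsf q\in\mathsf C}(1-m_{\mathsf q}(\tau+h))\geq\alpha\theta\abs{\mathsf C}$ from the corpse decomposition of $\mu$, Fubini, symmetry and the range of $K$, the exponential supermartingale $\exp\{-a N+(1-e^{-a})\Lambda\}$, and a binomial thinning of the newborns. The only step to tidy is Step~3, where two things need fixing. First, conditioning on the birth times is not legitimate, because the birth times depend on the newborns' own motion and deaths. Second, no stopping at $\lceil\lambda M\rceil$ is needed. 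As the paper does, it suffices to look at the first $n$ newborns in $B'$, with $n$ the deterministic threshold from Step~2. Their survive-and-stay indicators are i.i.d.\ Bernoulli with success probability $e^{-h}(1-\epsilon_h)$, since each depends only on that newborn's own post-birth path and lifetime, which are independent of the history up to its birth.
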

The set~$\mathsf C$ represents the corpses that the colony can recycle within the short time window: those that die not too close to the end of the window and do not stray too far before dying. The statement is that~$\alpha$-consuming all corpses in~$\mathsf C$ suffices for the population to grow.
\begin{proof}
The argument relies on there being many particles that can make use of the~$\alpha$-consumed corpses from~$\mathsf C$. Let
\begin{equation*}
\mathsf S
\defeq
\cb{
\mathsf{p}\in \mathsf P\cap \mathsf{L}_{\tau+h}:
\sup_{\tau\leq s\leq\tau+h}
\abs{X_s\bb{\mathsf{p}}-X_\tau\bb{\mathsf{p}}}
\leq\frac r4
}.
\end{equation*}
The set~$\mathsf S$ consists of the particles which survive until~$\tau+h$ and do not stray too far. 

The argument requires~$\abs{\mathsf S}$ and~$\abs{\mathsf C}$ to be comparable to~$M$. To show this, let~$W$ be a standard Brownian motion started at the origin, and let~$E\sim\operatorname{Exp}\bb{1}$, independent of each other. The process~$W$ represents the displacement of one particle after time~$\tau$, while~$E$ represents its lifetime. Set
\begin{align*}
s_h
&\defeq
\bP{
E>h,\sup_{0\leq s\leq h}\abs{W_s}\leq\frac r4
}=1-h + o(h),
\\
c_h
&\defeq
\bP{
E\leq(1-\kappa)h,\sup_{0\leq s\leq E}\abs{W_s}\leq\frac r4
}=(1-\kappa)h + o(h).
\end{align*}
Thus~$s_h$ is the probability that a particle belongs to~$\mathsf S$, while~$c_h$ is the probability that it belongs to~$\mathsf C$. The equalities above hold by the reflection principle, which gives~$\bP{\sup_{0\leq s\leq h} \abs{W_s}>\frac{r}{4}}=o(h)$.
Conditional on~$\mathcal F_\tau$, the trajectories and lifespans of the particles in~$\mathsf P$ are independent, with the laws of~$W$ and~$E$. Hence, for every~$\varepsilon<\min\cb{s_h,c_h}$, a Chernoff bound gives, for some~$\gamma'>0$,
\begin{equation}\label{eq:SC-lower-bounds}
\CP{
\abs{\mathsf S}<(s_h-\varepsilon)M
\text{ or }
\abs{\mathsf C}<(c_h-\varepsilon)M
}{
\mathcal F_\tau
}
\leq
2e^{-\gamma' M}.
\end{equation}

Next we show that if corpses in~$\mathsf C$ are~$\alpha$-consumed then there are many new births. For~$s\in[\tau,\tau+h]$, let
\begin{equation*}
\mathsf N_s
\defeq
\cb{
\mathsf{p}\in\mathsf T_s\setminus \mathsf T_{\tau}:
X_{b_{\mathsf{p}}}\bb{\mathsf{p}}\in B_{L+R+r/4}\bb{z}
}
\end{equation*}
be the new particles born in~$B_{L+R+r/4}\bb{z}$ by time~$s$, and set
\begin{equation*}
\Lambda_s
\defeq
\theta\int_\tau^s\int_{B_{L+R+r/4}\bb{z}}
\bb{K*\mu_u}\bb{x}\,\eta_u(dx)\,du.
\end{equation*}
Since~$z$ is~$\mathcal F_\tau$-measurable, the ball~$B_{L+R+r/4}\bb{z}$ is fixed conditional on~$\mathcal F_\tau$. A particle at~$x$ gives birth at rate~$\theta\bb{K*\mu_t}(x)$, so~$\Lambda$ is a compensator of~$\abs{\mathsf N}$. Hence~$\abs{\mathsf N}-\Lambda$ is a local martingale. We obtain the following lower bound on~$\Lambda_{\tau+h}$:
\begin{align*}
\Lambda_{\tau+h}
&=
\theta\int_\tau^{\tau+h}
\int_{B_{L+R+r/4}\bb{z}}
\int_{\R^d}
K\bb{x-y}\,\mu_s(dy)\,\eta_s(dx)\,ds
\\
&\geq
\theta\int_\tau^{\tau+h}
\int_{B_{L+R+r/4}\bb{z}}
\sum_{\mathsf q\in\mathsf D_s}
m_{\mathsf{q}}(s)
K\bb{x-X_{d_{\mathsf{q}}}\bb{\mathsf{q}}}
\,\eta_s(dx)\,ds
\\
&\geq
\theta\int_\tau^{\tau+h}
\int_{B_{L+R+r/4}\bb{z}}
\sum_{\mathsf q\in\mathsf C}
\ind\bb{d_{\mathsf q}\leq s}m_{\mathsf q}(s)
K\bb{x-X_{d_{\mathsf q}}(\mathsf q)}
\,\eta_s(dx)\,ds
\\
&=
\theta\sum_{\mathsf q\in\mathsf C}
\int_{d_{\mathsf q}}^{\tau+h} 
m_{\mathsf q}(s)\int_{B_{L+R+r/4}\bb{z}}
K\bb{X_{d_{\mathsf q}}\bb{\mathsf q}-x}\eta_s(dx)
\,ds
\\
&=
\theta\sum_{\mathsf q\in\mathsf C}
\int_{d_{\mathsf q}}^{\tau+h}
m_{\mathsf q}(s)
\bb{K*\eta_s}\bb{X_{d_{\mathsf q}}(\mathsf q)}
\,ds
\\
&=
\theta
\sum_{\mathsf q\in\mathsf C}
\int_{d_{\mathsf q}}^{\tau+h}
\bb{-\frac{d}{ds}m_{\mathsf q}(s)}\,ds
=
\theta
\sum_{\mathsf q\in\mathsf C}
\bb{1-m_{\mathsf q}(\tau+h)}.
\end{align*}
The first line is the definition of~$\Lambda_{\tau+h}$ with the convolution expanded.
The second line comes from the resource evolution formula~\eqref{eq:resource-evolution} dropping the contribution from~$\mu_0$. The third line is just~$\mathsf{C}\cap\mathsf D_s\subseteq \mathsf D_s$. The fourth line follows from Fubini's theorem and the symmetry of~$K$. The fifth line holds because for~$\mathsf{q}\in\mathsf C$ we have~$X_{d_{\mathsf{q}}}\bb{\mathsf{q}}\in B_{L+r/4}\bb{z}$, and~$K$ has range~$R$, so~$K\bb{X_{d_{\mathsf q}}(\mathsf q)-x}$ vanishes outside~$B_{L+R+r/4}\bb{z}$. The last two identities follow from~\eqref{eq:corpse-mass-evolution}: differentiating gives~$\frac{d}{ds}m_{\mathsf q}(s)
=
-m_{\mathsf q}(s)
\bb{K*\eta_s}\bb{X_{d_{\mathsf q}}(\mathsf q)}$,
and~$m_{\mathsf q}(d_{\mathsf q})=1$.
Therefore, on the event that every corpse in~$\mathsf C$ is~$\alpha$-consumed by~$\tau+h$,
\begin{equation*}
\Lambda_{\tau+h}
\geq \theta\sum_{\mathsf q\in\mathsf C}\bb{1-(1-\alpha)}\geq  
\alpha\theta\abs{\mathsf C}.
\end{equation*}
Consequently,
\begin{equation}\label{eq:compensator-lower-bound-event}
\cb{
\abs{\mathsf C}\geq(c_h-\varepsilon)M
}
\cap
\cb{
\text{every }\mathsf{q}\in\mathsf C
\text{ is $\alpha$-consumed by }\tau+h
}
\subseteq
\cb{
\Lambda_{\tau+h}
\geq
\alpha\theta(c_h-\varepsilon)M
}.
\end{equation}

A lower bound on~$\Lambda$ is useful because it is unlikely that a compensator of a counting process is large and the process small. For~$a>0$, set
\begin{equation*}
Z_s
\defeq
\exp\cb{
-a\abs{\mathsf N_s}
+
\bb{1-e^{-a}}\Lambda_s
},
\qquad
s\in[\tau,\tau+h].
\end{equation*}
Since~$\abs{\mathsf N}-\Lambda$ is a local martingale, we have
\begin{equation*}
dZ_s
=
Z_{s-}\bb{e^{-a}-1}
\,d\bb{\abs{\mathsf N_s}-\Lambda_s}.
\end{equation*}
Thus~$Z$ is a nonnegative local martingale, hence a supermartingale, with~$Z_\tau=1$. Choose~$a>0$ sufficiently small that~$\gamma''\defeq1-e^{-a}-a(1-\varepsilon)>0$. On the event where~$\abs{\mathsf N_{\tau+h}}\leq(1-\varepsilon)v$ and~$\Lambda_{\tau+h}\geq v$, we have~$Z_{\tau+h}\geq e^{\gamma''v}$, so
\begin{equation}\label{eq:compensator-large-process-small-unlikely}
\CP{
\abs{\mathsf N_{\tau+h}}\leq(1-\varepsilon)v,\
\Lambda_{\tau+h}\geq v
}{
\mathcal F_\tau
}
\leq
e^{-\gamma''v}
\CE{Z_{\tau+h}}{\mathcal F_\tau}
\leq
e^{-\gamma''v}.
\end{equation}
Thus~\eqref{eq:compensator-lower-bound-event} and~\eqref{eq:compensator-large-process-small-unlikely} with~$v=\alpha\theta(c_h-\varepsilon)M$ give, possibly after decreasing~$\gamma''$,
\begin{equation}\label{eq:N-lower-bound-very-likely}
\CP{
\begin{gathered}
\abs{\mathsf N_{\tau+h}}
<
(1-\varepsilon)\alpha\theta(c_h-\varepsilon)M,\quad
\abs{\mathsf C}\geq(c_h-\varepsilon)M,
\\
\text{every }\mathsf{q}\in\mathsf C
\text{ is $\alpha$-consumed by }\tau+h
\end{gathered}
}{
\mathcal F_\tau
}
\leq
e^{-\gamma'' M}.
\end{equation}

On the event that~$\abs{\mathsf N_{\tau+h}}$ is large we have many newborns. We verify that not too many of them die or stray too far away. Define the set of newborns in~$\mathsf{N}_{\tau+h}$ that survive until~$\tau+h$ and stay nearby:
\begin{equation*}
\mathsf{NS}
\defeq
\cb{
\mathsf{p}\in\mathsf N_{\tau+h}\cap \mathsf{L}_{\tau+h}:
\sup_{b_{\mathsf{p}}\leq s\leq\tau+h}
\abs{X_s\bb{\mathsf{p}}-X_{b_{\mathsf{p}}}\bb{\mathsf{p}}}
\leq\frac r4
}
\end{equation*}
Set~\mbox{$n\defeq\lfloor(1-\varepsilon)\alpha\theta(c_h-\varepsilon)M\rfloor$}.
Each newborn, conditionally on the history up to its birth time, survives for the next~$h$ units of time and stays within~$r/4$ of its birth location with probability~$s_h$, independently of the past. Such a newborn necessarily belongs to~$\mathsf{NS}$. Thus, on~$\cb{\abs{\mathsf N_{\tau+h}}\geq n}$, the lower tail of~$\abs{\mathsf{NS}}$ is bounded by that of a~$\operatorname{Bin}\bb{n,s_h}$ random variable. Hence a Chernoff bound gives
\begin{equation}\label{eq:NS-lower-bound-very-likely}
\CP{
\abs{\mathsf N_{\tau+h}}\geq n,\
\abs{\mathsf{NS}}<(s_h-\varepsilon)n
}{
\mathcal F_\tau
}
\leq
e^{-\gamma''' M}
\end{equation}
for some~$\gamma'''>0$. Thus it is unlikely that there are many newborns but few survivors.

We check how far the particles from~$\mathsf S$ and~$\mathsf{NS}$ could have gone. Every particle in~$\mathsf S$ lies in~$B_{L+r/4}\bb{z}$. Every particle in~$\mathsf{NS}$ was born in~$B_{L+R+r/4}\bb{z}$ and moves by at most~$r/4$ after birth, so at time~$\tau+h$ it lies in~$B_{L+R+r/2}\bb{z}$. Thus all particles of~$\mathsf S\cup\mathsf{NS}$ lie in~$B_{L+R+r/2}\bb{z}$ at time~$\tau+h$.

The sets~$\mathsf S$ and~$\mathsf{NS}$ are disjoint, because particles in the former existed before time~$\tau$ and those in the latter were born during~$(\tau,\tau+h]$. Therefore
\begin{equation*}
\eta_{\tau+h}\bb{B_{L+R+r/2}\bb{z}}
\geq
\abs{\mathsf S}+\abs{\mathsf{NS}}.
\end{equation*}

Now we choose the parameters. For small~$h$ we have
\begin{equation*}
s_h+s_h\alpha\theta c_h
=
1+\bb{(1-\kappa)\alpha\theta-1}h+o(h)>1.
\end{equation*}
Hence we can choose~$\varepsilon>0$ and then~$\lambda>1$ with
\begin{equation*}
\lambda
<
(s_h-\varepsilon)
\Cb{1+(1-\varepsilon)\alpha\theta(c_h-\varepsilon)},
\end{equation*}
since the right-hand side tends to~$s_h+s_h\alpha\theta c_h$ as~$\varepsilon\to0$. By the definition of~$n$, for all sufficiently large~$M$ this implies
\begin{equation}\label{eq:deterministic-one-step-growth}
(s_h-\varepsilon)M+(s_h-\varepsilon)n
\geq
\lambda M.
\end{equation}

We finish the proof by combining the estimates. By~\eqref{eq:SC-lower-bounds}, we have~$\abs{\mathsf S}\geq(s_h-\varepsilon)M$ and~$\abs{\mathsf C}\geq(c_h-\varepsilon)M$, outside of an event with probability~$2e^{-\gamma' M}$. On the event that every corpse in~$\mathsf C$ is~$\alpha$-consumed by~$\tau+h$, the bound~\eqref{eq:N-lower-bound-very-likely} gives~$\abs{\mathsf N_{\tau+h}}\geq n$, outside an event of probability at most~$e^{-\gamma''M}$. On the event~$\abs{\mathsf N_{\tau+h}}\geq n$, the bound~\eqref{eq:NS-lower-bound-very-likely} gives~$\abs{\mathsf{NS}}\geq(s_h-\varepsilon)n$, outside an event of probability at most~$e^{-\gamma'''M}$. Therefore, by~\eqref{eq:deterministic-one-step-growth},
\begin{equation*}
\eta_{\tau+h}\bb{B_{L+R+r/2}\bb{z}}
\geq
\abs{\mathsf S}+\abs{\mathsf{NS}}
\geq
\lambda M
\end{equation*}
whenever all corpses in~$\mathsf C$ are~$\alpha$-consumed, except on an event with probability no larger than~$2e^{-\gamma' M}+e^{-\gamma'' M}+e^{-\gamma''' M}$. This proves the lemma for~$0<\gamma<\min\cb{\gamma',\gamma'',\gamma'''}$ and sufficiently large~$M_*$.
\end{proof}

\subsection{Doubling of a dense colony}

We now show that an~$r$-ball containing~$H$ particles produces, after a fixed time, an~$r$-ball containing~$2H$ particles except with probability exponentially small in~$H$. Lemma~\ref{lem:short-time-growth-new} shows that a large colony grows over a short interval if every corpse in~$\mathsf C$ is~$\alpha$-consumed. This is precisely what high density provides. A corpse in~$\mathsf C$ has at least~$\kappa h$ time to be consumed, so for the consumption to become very likely, it suffices that there are~$O\bb{\frac{1}{\kappa h}}$ particles nearby. If~$H$ is large, then this requires only a small fraction of the initial population. We reserve part of it to maintain this density throughout the region that the growing colony can reach, and apply Lemma~\ref{lem:short-time-growth-new} repeatedly to the remaining particles. Their number then grows exponentially, while the region they occupy can be covered with polynomially many~$r$-balls. Eventually, one such ball must contain~$2H$ particles.

\begin{proposition}\label{prop:dense-doubling-new}
There are~$T,\gamma>0$ and~$H_0\in\N$, depending only on~$\theta$ and~$K$, such that the following holds. Let~$S$ be a finite stopping time and~$z$ an~$\mathcal F_S$-measurable point. For every~$H\geq H_0$, on~$\cb{\eta_S\bb{B_r\bb{z}}\geq H}$ there is an~$\mathcal F_{S+T}$-measurable point~$z'\in\R^d$ such that
\begin{equation*}
\CP{
\eta_{S+T}\bb{B_r\bb{z'}}\geq2H
}{
\mathcal F_S
}
\geq1-e^{-\gamma H}.
\end{equation*}
\end{proposition}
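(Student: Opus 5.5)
The plan is to split the initial $H$ particles in $B_r(z)$ into a \emph{reserve} of $\lfloor\rho H\rfloor$ particles and a \emph{colony} $\mathsf P_0$ of the remaining $\lceil(1-\rho)H\rceil$. The fraction $\rho\in(0,1)$ is small and fixed. We then iterate Lemma~\ref{lem:short-time-growth-new} on the colony along the deterministic times $\tau_j\defeq S+jh$, $j=0,\dots,J-1$, with $T\defeq Jh$. At step $j$ we apply the lemma with $\tau=\tau_j$, $z$ fixed, $L_j\defeq r+j(R+r/2)$, and with $\mathsf P_j$ equal to the first $\lceil\lambda^j(1-\rho)H\rceil$ particles (in some measurable ordering) of $\eta_{\tau_j}$ restricted to $B_{L_j}(z)$, excluding the reserve. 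The lemma's conclusion gives $\eta_{\tau_{j+1}}\bb{B_{L_{j+1}}(z)}\geq\lambda^{j+1}(1-\rho)H$, which feeds the next step, provided every corpse in the corresponding $\mathsf C_j$ is $\alpha$-consumed by $\tau_j+h$. A union bound over $J$ steps then costs at most $\sum_j e^{-\gamma\lambda^j(1-\rho)H}$ plus the failure probabilities of the consumption events.

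The main obstacle is to show that, with probability $1-e^{-cH}$, every corpse of every $\mathsf C_j$ is $\alpha$-consumed. I would obtain this from the reserve, not the colony, because the colony spreads out to radius $L_j$ while corpses of $\mathsf C_j$ can sit anywhere in $B_{L_j+r/4}(z)$. The reserve must therefore cover this whole growing region. Concretely, I would cover $B_{L_J+r}(z)$ by $N_J=O(J^d)$ balls of radius $r/4$. Then I would show that with probability $1-e^{-cH}$, for every cell $Q$ of the cover and every time $s\in[S,S+T]$, at least $k\defeq\lceil 2\log(1/(1-\alpha))/(k_0\kappa h)\rceil$ particles lie within $r/2$ of $Q$. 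Any corpse in $Q$ then experiences $(K*\eta_s)\geq k_0k$ for a time of length at least $\kappa h$ after its death, so $m_{\mathsf q}\leq e^{-k_0k\kappa h}\leq1-\alpha$ by~\eqref{eq:corpse-mass-evolution}.

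The reserve particles are independent Brownian motions with $\mathrm{Exp}(1)$ lifetimes, ignoring their offspring, and this gives a lower bound on $\eta$. Each one survives to time $T$ with probability $e^{-T}$. For a fixed cell $Q$ and a time grid, the probability that a reserve particle is in the $r/4$-neighbourhood of $Q$ is bounded below by some $p_J>0$, uniformly over the grid, since $T$ and $J$ are fixed. Continuity in time then follows from a path-modulus bound at scale $r/4$ on a fine grid. A Chernoff bound gives $\geq k$ such particles with probability $1-e^{-c\rho p_JH}$ once $\rho p_JH\geq2k$. We take a union bound over the $O(J^d)$ cells and the grid points. One subtlety is that at intermediate times the reserve is spread out, so $p_J$ degrades with the distance of $Q$ from $z$. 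It may therefore be cleaner to let reserve particles be seeded by reserved births, but I would first try the direct Brownian bound, accepting a $J$-dependent $p_J$.

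Finally we choose $J$, with all constants depending only on $\theta$ and $K$. At time $T$ the colony has $\geq\lambda^J(1-\rho)H$ particles in $B_{L_J}(z)$, which is covered by $C_dL_J^d/r^d$ balls of radius $r$. Choose $J$ with $\lambda^J(1-\rho)>2C_d(L_J/r)^d$; this is possible since $\lambda^J$ grows exponentially and $L_J^d$ polynomially. By pigeonhole some covering ball contains $\geq2H$ particles. Let $z'$ be the center of the first such ball in a fixed enumeration; it is $\mathcal F_{S+T}$-measurable. Then take $H_0$ large enough that $\lambda^j(1-\rho)H\geq M_*$, the Chernoff requirement above holds, and the total error is at most $e^{-\gamma H}$ for a suitable smaller $\gamma$.
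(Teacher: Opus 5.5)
Your architecture (iterating Lemma~\ref{lem:short-time-growth-new} in balls of linearly growing radius, keeping a fixed number of particles near every corpse, then a pigeonhole over $O(J^d)$ balls) matches the paper's. The density step, however, fails as you set it up. You require at least $k$ particles within $r/2$ of every cell $Q$ covering $B_{L_J+r}(z)$ at \emph{every} $s\in[S,S+T]$, with a per-particle probability $p_J>0$ uniform over the time grid. At $s=S$ all reserve particles lie in $B_r(z)$. So for a cell whose centre is farther than $2r$ from $z$ this probability is $0$, and it tends to $0$ as $s\downarrow S$; the infimum over the grid is zero, not a positive $p_J$. On configurations with no particles outside $B_r(z)$, your event therefore fails with conditional probability one, not $e^{-cH}$. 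This is not only a far-cell issue. At step $0$ a colony particle can die just after $S$ near one side of $B_r(z)$ while the whole reserve sits near the opposite side, at distance close to $2r$. So requiring density at all times right after $S$ fails even for cells close to $z$.

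The missing idea is to let the particles spread before you rely on them. The paper spends $[S,S+h]$ on spreading alone, starts the growth iteration at $S+h$ with $M_0=\lfloor s_hH/2\rfloor$ (hence $T=(k+1)h$), and needs density only on $[S+h,S+T]$. Alternatively, keep your times $\tau_j$ and demand, for step $j$, density near each cell meeting $B_{L_j+r/4}(z)$ only on $[\tau_j+(1-\kappa)h,\tau_j+h]$. This window has length $\kappa h$, lies inside $[d_{\mathsf q},\tau_j+h]$ for every $\mathsf q\in\mathsf C_j$, and starts a positive time after $S$. In both versions, use a single event per particle (alive and inside a fixed covering ball for the whole window, as in the paper's sets $\mathsf D_m$). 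It has probability bounded below by some $p>0$ uniformly, which makes the time grid and path-modulus argument unnecessary.

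Finally, the reserve split is not needed and costs you bookkeeping. The lemma's conclusion is a lower bound on $\eta_{\tau_j+h}\bigl(B_{L_{j+1}}(z)\bigr)$, which may include up to $\rho H$ reserve particles. Excluding them leaves only $\lambda^{j+1}(1-\rho)H-\rho H$ particles for $\mathsf P_{j+1}$. The lemma controls the joint event ``no growth and every corpse in $\mathsf C_j$ is $\alpha$-consumed'', and consumption follows pathwise from the density event. So the same initial particles can serve both roles, as in the paper.
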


\begin{proof}
Fix~$k\in\N$, to be chosen later, and set
\begin{equation*}
T\defeq(k+1)h,
\qquad
B_j
\defeq
B_{r+r/4+j(R+r/2)}\bb{z},
\qquad
j=0,\ldots,k,
\end{equation*}
where~$h$ is as in Lemma~\ref{lem:short-time-growth-new}. Write~$B\defeq B_k$. Our goal is to show that, except with probability exponentially small in~$H$, we have~$\eta_{S+T}\bb{B}\gtrsim\lambda^kH$. The radius of~$B$ grows linearly in~$k$, so~$B$ can be covered by~$O\bb{k^d}$ balls of radius~$r$. Since~$\lambda>1$, we can choose~$k$ so that one of these~$r$-balls contains at least~$2H$ particles.

We prove the lower bound on~$\eta_{S+T}\bb{B}$ by repeatedly applying Lemma~\ref{lem:short-time-growth-new} to~$k$ successive time intervals of length~$h$. The~$j$th application starts with a dense colony in~$B_j$ and implies the existence of a dense colony in~$B_{j+1}$. Thus all relevant particles and corpses remain in~$B$. To iterate, we must ensure that every corpse in the corresponding sets~$\mathsf C$ is~$\alpha$-consumed. 

Recall that~$K\bb{x}\geq k_0$ whenever~$\abs{x}\leq r$. Consider one of the growth intervals~$\Cb{t,t+h}$ and a corpse~$\mathsf{q}\in\mathsf C$, as in Lemma~\ref{lem:short-time-growth-new}. Since~$d_{\mathsf q}\leq t+(1-\kappa)h$, the corpse has at least~$\kappa h$ time to be consumed before the end of the interval. If at least~$n$ particles remain within distance~$r$ of~$X_{d_{\mathsf{q}}}\bb{\mathsf{q}}$ throughout~$\Cb{t,t+h}$, then
\begin{equation}\label{eq:many-particles-means-fast-consumption}
m_{\mathsf{q}}(t+h)
\leq
e^{-k_0n(t+h-d_{\mathsf{q}})}
\leq
e^{-k_0n\kappa h}.
\end{equation}
Choose~$n\in\N$ large enough that~$e^{-k_0n\kappa h}\leq1-\alpha$. Thus~$n$ nearby particles are enough to make a corpse~$\alpha$-consumed within time~$\kappa h$.

To apply Lemma~\ref{lem:short-time-growth-new} repeatedly, we only need to verify that there are at least~$n$ particles within distance~$r$ of any point in~$B$, throughout~$\Cb{S+h,S+T}$.  Choose~$\mathcal F_S$-measurable points~$z_1,\ldots,z_l$ such that
\begin{equation*}
B
\subseteq
\bigcup_{m=1}^l B_{r/2}\bb{z_m},
\qquad
l\leq C(1+k)^d,
\end{equation*}
where~$C<\infty$ depends only on~$d,r$, and~$R$. For~$m\leq l$, define
\begin{equation*}
\mathsf D_m
\defeq
\cb{
\begin{aligned}
\mathsf{p}\in\mathsf L_S\cap \mathsf L_{S+T}:\quad&
X_S\bb{\mathsf{p}}\in B_r\bb{z},\\
&
X_s\bb{\mathsf{p}}\in B_{r/2}\bb{z_m}
\text{ for every }s\in[S+h,S+T]
\end{aligned}
}.
\end{equation*}
Thus~$\mathsf D_m$ consists of particles from the initial colony which enter the~$m$th covering ball by time~$S+h$, then remain there and alive until~$S+T$. Let~$\Dcal\defeq\bigcap_{m=1}^l\cb{\abs{\mathsf D_m}\geq n}$. On~$\Dcal$, every point of~$B$ has at least~$n$ particles within distance~$r$ throughout~$\Cb{S+h,S+T}$.

For fixed~$k$, there is~$p>0$ such that, conditional on~$\mathcal F_S$, every particle in~$B_r\bb{z}$ belongs to~$\mathsf D_m$ with probability at least~$p$, uniformly in~$m\leq l$. Indeed, the centers~$z_m$ lie at a bounded distance from~$z$, and a Brownian particle has uniformly positive probability to be in~$B_{r/2}\bb{z_m}$ at time~$S+h$, remain there until~$S+T$, and survive throughout this interval. Conditional on~$\mathcal F_S$, these events are independent for the particles in~$B_r\bb{z}$. Hence, on~$\cb{\eta_S\bb{B_r\bb{z}}\geq H}$, each~$\abs{\mathsf D_m}$ stochastically dominates~$\operatorname{Bin}\bb{H,p}$. Since~$n$ is fixed, a binomial lower-tail bound and a union bound give~$\gamma'>0$ such that, for all~$H$ sufficiently large,
\begin{equation}\label{eq:density-background-failure}
\CP{
\Dcal^c
}{
\mathcal F_S
}
\leq
e^{-\gamma'H}.
\end{equation}

We now set up the~$k$ applications of Lemma~\ref{lem:short-time-growth-new}. Set
\begin{equation*}
t_j\defeq S+(j+1)h,
\qquad
M_0\defeq\left\lfloor\frac{s_hH}{2}\right\rfloor,
\qquad
M_{j+1}\defeq\lfloor\lambda M_j\rfloor,
\end{equation*}
for~$j\leq k$ in the first definition and~$j\leq k-1$ in the last. Thus~$t_0=S+h$,~$t_k=S+T$, and~$M_j$ is a lower bound on population size after~$j$ growth steps, with~$M_k=\frac{s_h}{2}\lambda^kH+O(1)$. If~$\eta_{t_j}\bb{B_j}\geq M_j$ then
let~$\mathsf P_j$ be an~$\mathcal F_{t_j}$-measurable set of~$M_j$ particles in~$B_j$; otherwise let~$\mathsf P_j=\varnothing$. 
Let~$\mathsf C_j$ be the set~$\mathsf C$ from Lemma~\ref{lem:short-time-growth-new} corresponding to~$\mathsf P_j$ on the interval~$\Cb{t_j,t_{j+1}}$,
\begin{equation*}
\mathsf C_j
\defeq
\cb{
\mathsf{q}\in\mathsf P_j\cap \mathsf{D}_{t_j + \bb{1-\kappa}h}:
\sup_{t_j\leq s\leq d_{\mathsf{q}}}
\abs{X_s\bb{\mathsf{q}}-X_{t_j}\bb{\mathsf{q}}}
\leq\frac r4
}.
\end{equation*}
These are precisely the corpses whose consumption is required in the~$j$th growth step.

We claim that on~$\Dcal$ every corpse in each~$\mathsf{C}_j$ is~$\alpha$-consumed by time~$t_{j+1}$. Indeed, if~$\mathsf{q}\in\mathsf C_j$, then its death location belongs to~$B_{j+1}\subseteq B$, so there are at least~$n$ particles within distance~$r$ of the corpse throughout~$\Cb{d_{\mathsf{q}},t_{j+1}}$. By~\eqref{eq:many-particles-means-fast-consumption} and the choice of~$n$, the corpse~$\mathsf{q}$ is~$\alpha$-consumed by time~$t_{j+1}$.

Now we start the iteration. Conditional on~$\mathcal F_S$, each particle in~$B_r\bb{z}$ survives until~$S+h$ and moves by at most~$r/4$ during~$\Cb{S,S+h}$ with probability~$s_h$, independently of the other particles. Every such particle belongs to~$B_0$ at time~$t_0$. Hence, on~$\cb{\eta_S\bb{B_r\bb{z}}\geq H}$,~$\eta_{t_0}\bb{B_0}$ stochastically dominates~$\operatorname{Bin}\bb{H,s_h}$. Thus a Chernoff bound gives~$\gamma''>0$ such that
\begin{equation}\label{eq:initial-population-failure}
\CP{
\eta_{t_0}\bb{B_0}<M_0
}{
\mathcal F_S
}
\leq
\CP{
\eta_{t_0}\bb{B_0}<\frac{s_h H}{2}
}{
\mathcal F_S
}
\leq
e^{-\gamma''H}.
\end{equation}

For~$H$ sufficiently large,~$M_j\geq M_*$ for every~$j\leq k$. Whenever~$\eta_{t_j}\bb{B_j}\geq M_j$, Lemma~\ref{lem:short-time-growth-new}, applied to~$\mathsf P_j$, gives
\begin{equation*}
\CP{
\eta_{t_{j+1}}(B_{j+1})<\lambda M_j,\
\text{every }\mathsf{q}\in\mathsf C_j
\text{ is $\alpha$-consumed by }t_{j+1}
}{
\mathcal F_{t_j}
}
\leq
e^{-\gamma''' M_j},
\end{equation*}
for some~$\gamma'''>0$. To make the iteration explicit, let~$\mathcal A_j\defeq\bigcap_{i=0}^j\cb{\eta_{t_i}\bb{B_i}\geq M_i}$. On~$\Dcal$, every corpse in each~$\mathsf C_j$ is~$\alpha$-consumed by~$t_{j+1}$ and on~$\mathcal{A}_j$ the ball~$B_j$ contains at least~$M_j$ particles. Hence, 
\begin{align*}
\CP{
\Dcal\cap \mathcal A_j\cap \mathcal A_{j+1}^c
}{
\mathcal F_S
}&=
\CE{
\ind_{\mathcal A_j}
\CP{
\Dcal\cap 
\cb{\eta_{t_{j+1}}(B_{j+1})<M_{j+1}}
}{
\mathcal F_{t_j}
}
}{
\mathcal F_S
}\\
&\leq
\CE{
\ind_{\mathcal A_j}
e^{-\gamma''' M_j}
}{
\mathcal F_S
} \leq e^{-\gamma''' M_j},
\end{align*}
so the probability that the~$j$th growth step fails is exponentially small in~$H$. Thus,
\begin{align}
& \CP{
\Dcal\cap \mathcal A_0\cap
\cb{\eta_{S+T}\bb{B}<M_k}
}{
\mathcal F_S
}
\leq
\CP{
\Dcal\cap \mathcal A_0\cap
\mathcal{A}_k^c
}{
\mathcal F_S
}
\nonumber\\
& \qquad \leq
\sum_{j=0}^{k-1}
\CP{
\Dcal\cap \mathcal A_j\cap \mathcal A_{j+1}^c
}{
\mathcal F_S
}
  \leq
\sum_{j=0}^{k-1}e^{-\gamma'''M_j}
\leq
e^{-\gamma''''H}, \label{eq:growth-iteration-failure}
\end{align}
for some~$\gamma''''>0$ and all~$H$ sufficiently large. We have
\begin{equation*}
\cb{\eta_{S+T}\bb{B}<M_k}
\subseteq
\Dcal^c
\cup
\mathcal A_0^c
\cup
\bb{
\Dcal\cap\mathcal A_0\cap
\cb{\eta_{S+T}\bb{B}<M_k}
}.
\end{equation*}
Therefore, by~\eqref{eq:density-background-failure}, \eqref{eq:initial-population-failure}, and~\eqref{eq:growth-iteration-failure},
\begin{equation*}
\CP{
\eta_{S+T}\bb{B}<M_k
}{
\mathcal F_S
}
\leq
e^{-\gamma'H}
+
e^{-\gamma''H}
+
e^{-\gamma''''H}\leq
e^{-\gamma H}
\end{equation*}
for some~$\gamma>0$ and all~$H$ sufficiently large.

It remains to choose~$k$ and~$z'$. Since~$M_{j+1}\geq\lambda M_j-1$, we have~$M_k
\geq
\lambda^kM_0-\frac{\lambda^k}{\lambda-1}
\geq
\frac{s_h}4\lambda^kH$
for all~$H$ sufficiently large. The ball~$B$ is covered by the balls~$B_{r/2}\bb{z_i}\subseteq B_r\bb{z_i}$,~$i\leq l$, with~$l\leq C(1+k)^d$ and~$C$ depending only on~$d$,~$r$ and~$R$. Since~$\lambda>1$, we can fix~$k$, depending only on the model parameters, such that~$\frac{s_h}{4}\lambda^k
\geq
2C(1+k)^d$.
On~$\cb{\eta_{S+T}\bb{B}\geq M_k}$ some~$z_i$ then satisfies
\begin{equation*}
\eta_{S+T}\bb{B_r\bb{z_i}}
\geq
\frac{M_k}{l}
\geq
\frac{s_h\lambda^kH}{4C(1+k)^d}
\geq
2H.
\end{equation*}
Let~$z'$ be~$z_i$ for the smallest such~$i$, and~$z'\defeq z_1$ if there is none. Then~$z'$ is~$\mathcal F_{S+T}$-measurable and
\begin{equation*}
\CP{
\eta_{S+T}\bb{B_r\bb{z'}}<2H
}{
\mathcal F_S
}
\leq
\CP{
\eta_{S+T}\bb{B}<M_k
}{
\mathcal F_S
}
\leq
e^{-\gamma H}. \qedhere
\end{equation*}
\end{proof}

\subsection{Entrance lemma and proof of Theorem~\ref{thm:main}(ii)}

Proposition~\ref{prop:dense-doubling-new} shows that once the process reaches a sufficiently dense local configuration, the growth argument can be restarted indefinitely with positive probability. It remains only to show that every initial state in Theorem~\ref{thm:main}(ii) can reach such a configuration.

\begin{lemma}\label{lem:entrance-new}
For every~$H\in\N$ and every finite initial state with at least two particles, or one particle and positive resource,
there exists an~$r$-ball~$B$ such that for any~$T>0$ we have~$\bP{
\eta_T\bb{B}\geq H
}
>0$.
\end{lemma}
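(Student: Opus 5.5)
We construct an explicit positive-probability scenario. The ball is chosen as follows. In the case $\eta_0(\R^d)\geq2$, pick two particles at $x_1,x_2$ and let $B\defeq B_r(x_1)$. In the case of one particle at $x_1$ with $\mu_0(\R^d)>0$, pick a point $y$ in the support of $\mu_0$ so that $\mu_0(B_{r/2}(y))>0$, and let $B\defeq B_r(y)$. We then fix $T>0$ and exhibit an event of positive probability on which $\eta_T(B)\geq H$.

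The first step is to produce positive resource inside $B$ near a living particle. In the two-particle case, the scenario is that particle 2 moves into $B_{r/4}(x_1)$ during $[0,T/4]$ and dies there, while particle 1 stays in $B_{r/4}(x_1)$ and survives. This has positive probability: a Brownian motion remains in any tube around a continuous path with positive probability, and the exponential clocks are independent of the paths. The corpse then has mass $m_{\mathsf q}(T/4)\geq e^{-\norm{K}_\infty \eta\text{-count}\cdot T}>0$, because only finitely many particles exist and $K$ is bounded. In the one-particle case, we instead let the particle travel into $B_{r/4}(y)$ and stay there. The resource in $B_{r/2}(y)$ then decays at most at rate $\norm{K}_\infty$ per particle, so it stays bounded below by a positive constant. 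In either case we reach a state at time $T/4$ with a living particle $\mathsf p$ in $B_{r/4}$ of the centre and resource of mass at least some $\rho>0$ within distance $3r/4$ of it. We may also require that all other particles have died before $T/4$, which has positive probability.

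The second step generates births. We work on the event that, for $t\in[T/4,T]$, the particle $\mathsf p$ and all its descendants stay in $B_{r/4}$ of the centre. Their distance to the resource is then at most $r$, so each has birth rate at least $\theta k_0\rho_t$. Here $\rho_t$ is the remaining resource, which is nonincreasing but bounded below: at most $H$ particles ever exist on the event, so $\rho_t\geq\rho e^{-H\norm{K}_\infty T}$. We further require that, during $[T/4,T]$, exactly $H-1$ births occur and no deaths occur. Given the trajectories, the births form a point process whose intensity is bounded above by $\theta\norm{K}_\infty\mu_0$-type constants and below by the positive quantity just described. The probability of exactly $H-1$ births and no deaths in a window of fixed length is therefore positive. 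Finally, the confinement of each of the at most $H$ Brownian paths to a small ball for a fixed time has positive probability, independently given birth times.

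To make this rigorous, we would condition successively on the history at each birth time and use the strong Markov property. This gives a product of finitely many strictly positive factors. It is convenient to replace the full confinement event by the requirement that each path stays within $B_{r/4}$, which is uniform in starting points inside a slightly smaller ball; for that we use $B_{r/8}$ as the initial window. The main obstacle is purely bookkeeping: the birth intensity depends on the resource, which is itself consumed by the particles. We handle this with the deterministic a priori lower bound on the resource from \eqref{eq:resource-between-deaths}, valid while at most $H$ particles exist and no deaths occur. Combining all these steps gives $\bP{\eta_T(B)\geq H}>0$ for every $T>0$, with $B$ independent of $T$.
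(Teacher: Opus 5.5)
Your proposal is correct and is essentially the paper's argument. In both, the particles are confined near a patch of resource, which stays bounded below by $\rho e^{-H\norm{K}_\infty T}$ thanks to \eqref{eq:resource-between-deaths}, so birth rates lie between two positive constants; exactly $H-1$ births and no deaths then occur, and the case of at least two particles is reduced to one particle next to a corpse. Placing the second corpse explicitly in $B_{r/4}(x_1)$ is a slightly more careful form of the paper's reduction, since it makes $B$ deterministic and independent of $T$. You should also forbid births before $T/4$, as the paper does with ``without branching''; this costs only another positive factor.
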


\begin{proof}
We first treat the case of one particle~$\mathsf p$ and~$\mu_0\bb{\R^d}>0$. Then there exists an~$r/2$-ball~$B'$ such that~$\mu_0\bb{B'}\geq \varepsilon$ for some~$\varepsilon>0$. Let~$B$ be an~$r$-ball with the same center as~$B'$. For any~$x,y\in B'$ we have~$K\bb{x-y}\geq k_0$. Consider the event that~$\mathsf p$ enters~$B'$ by time~$\frac{T}{2}$ without branching, does not leave~$B'$ before~$T$, survives until~$T$, branches exactly~$H-1$ times while in~$B'$, and its offspring never leave~$B'$, do not branch and do not die before~$T$. Because~$K$ is bounded, on this event we have~$\bb{K*\eta_t}\bb{y}\leq H\norm{K}_\infty$, so by~\eqref{eq:resource-between-deaths} the resource available in~$B'$ stays uniformly bounded below throughout~$\Cb{0,T}$: 
\begin{equation*}
\mu_t\bb{B'}
\geq
\varepsilon e^{-H\norm K_\infty t}
\geq
\varepsilon e^{-H\norm K_\infty T}.
\end{equation*}
Thus the rate at which~$\mathsf p$ branches while in~$B'$ is at least~$\theta k_0\varepsilon e^{-H\norm K_\infty T}$, and it is at most~$\theta\norm K_\infty\mu_0\bb{\R^d}$, since on this event there are no deaths in~$\Cb{0,T}$, so~$\mu_t\bb{\R^d}\leq \mu_0\bb{\R^d}$. Since the particle motions are independent of the branching and of each other, the event has positive probability. On it,~$\eta_T\bb{B}\geq\eta_T\bb{B'}=H$.

The second case,~$\eta_0\bb{\R^d}\geq2$, reduces to the first with positive probability, via the event that all initial particles but one die before any branching.
\end{proof}
\begin{proof}[Proof of Theorem~\ref{thm:main}(ii)]
Let~$T$,~$\gamma$, and~$H_0$ be as in Proposition~\ref{prop:dense-doubling-new}, and choose~$H\geq H_0$ so large that $\sum_{k=0}^{\infty}e^{-\gamma2^kH}<1$.
By Lemma~\ref{lem:entrance-new}, there is an~$r$-ball~$B_0$ such that~$\bP{\eta_T\bb{B_0}\geq H}>0$.

On~$\cb{\eta_T\bb{B_0}\geq H}$, apply Proposition~\ref{prop:dense-doubling-new} successively at times~$T,2T,\ldots$, using at each step the measurable ball supplied by the proposition. If the first~$k$ doublings have succeeded, the~$k$th application starts from an~$r$-ball containing at least~$2^kH$ particles, so its conditional probability of failure is at most~$e^{-\gamma2^kH}$. Hence, by a union bound,
\begin{equation*}
\CP{
\eta_{(k+1)T}\bb{B_k}\geq2^kH
\text{ for every }k\geq0
}{
\eta_T\bb{B_0}\geq H
}
\geq
1-\sum_{k=0}^{\infty}e^{-\gamma2^kH}
>0,
\end{equation*}
where~$B_k$ denotes the ball selected after~$k$ successful steps. Since the entrance event also has positive probability, with positive probability the population is nonempty at all times~$(k+1)T$. Extinction is absorbing, so on this event~$\tau_0=\infty$. Thus~$\bP{\tau_0=\infty}>0$.
\end{proof}

\begin{acks}
This material is based upon work supported by the NSF Grant No.~DMS-2348113 awarded to Marek Biskup. The authors also thank him for insightful discussions. Generative AI tools (Claude and ChatGPT) were used to assist with editing the text, searching the literature, and checking the proofs for gaps. All mathematical content was verified by the authors, who take full responsibility for the manuscript.
\end{acks}

\end{document}